\newtheorem{thm}{Theorem}[section]
\newtheorem{lem}[thm]{Lemma}
\newtheorem{cor}[thm]{Corollary}
\newtheorem{remark}{Remark}[section]
\theoremstyle{defn}
\numberwithin{equation}{section}
\def \bt {\begin{tabular}{l}}
\def \et {\end{tabular}}
\begin{document}

\title{Local Diffeomorphisms in Symplectic Space and Hamiltonian Systems with Constraints}

\author{Konstantinos Kourliouros\footnote{
Avenida Trabalhador Sancarlense, 400 - Centro, S\~ao Carlos - SP, 13566-590, \newline 
email: k.kourliouros@gmail.com}\\
ICMC-USP}

\maketitle

\begin{abstract}
In this paper we obtain exact normal forms with functional invariants for local diffeomorphisms, under the action of the symplectomorphism group in the source space. Using these normal forms we obtain exact classification results for the first occurring singularities of Hamiltonian systems with one-sided constraints, a problem posed by R. B. Melrose in his studies of glancing hypersurfaces. 
\end{abstract}

\medskip 

\noindent \textbf{Mathematics Subject Classification (2010)}: 37J05, 37C05, 34K17, 70H15. 

\noindent \textbf{Keywords:}  symplectic structures, normal forms, functional moduli, Hamiltonian systems, constraints.

%
%
%
%
%

\section{Introduction}

All objects in this paper are either smooth $C^{\infty}$ or analytic germs at the origin of $\mathbb{R}^{2n}$, unless otherwise stated. By a symplectic space we mean a pair $(\mathbb{R}^{2n},\omega)$ where $\omega$ is a symplectic form, i.e. a closed, non-degenerate 2-form. It is well known (Darboux theorem c.f. \cite{A3}) that all symplectic spaces are locally equivalent, that is, there exists local coordinates $(p,q):=(p_1,q_1,\cdots,p_n,q_n)$ such that $\omega$ can be reduced to the standard Darboux normal form:
\begin{equation}
\label{Darboux}
\omega=dp\wedge dq,
\end{equation}
where we denote $dp\wedge dq:=\sum_{i=1}^ndp_i\wedge dq_i$.

The main purpose is to give local classification results, under the action of symplectomorphisms of the Darboux normal form $\omega=(\ref{Darboux})$, for the following associated objects: 
\begin{itemize}
\item[-] Local diffeomorphisms, i.e. maps $\Phi:(\mathbb{R}^{2n},\omega)\rightarrow \mathbb{R}^{2n}$, $\det \Phi_*(0)\ne 0$: two diffeomorphisms $\Phi$, $\Phi'$ are equivalent if there exists a symplectomorphism $\Psi$ of $\omega$, $\Psi^*\omega=\omega$, such that $\Psi^*\Phi'=\Phi$.
\item[-] First occurring singularities of pairs $(f,H)$, where $f$ is a non-singular function and $H=\{h=0\}$ is a smooth hypersurface: two pairs $(f,H)$, $(f',H')$ are equivalent if there exists a symplectomorphism $\Psi$ of $\omega=(\ref{Darboux})$, $\Psi^*\omega=\omega$, such that $\Psi^*f'=f$, $\Psi(H')=H$.
\end{itemize} 
These objects, as will it become apparent in the text, are related in the following way: symplectic classification of diffeomorphisms $\Phi$ (Theorem \ref{thm-1}) depends on the symplectic classification of non-singular pairs $(f,H)$ (Lemma \ref{lem-1}) and on the structure of their isotropy group (Lemma \ref{lem-2}), a subgroup of the symplectomorphism group. On the other hand, symplectic classification of the first occurring singularities of pairs $(f,H)$ (Theorem \ref{thm-2}) depends, through a reduction process (Lemma \ref{lem-3}), on the symplectic classification of diffeomorphisms $\Phi$.

Pairs $(f,H)$ in symplectic space $(\mathbb{R}^{2n},\omega)$ can be identified with Hamiltonian systems with one-sided constraints, where $f$ is the Hamiltonian function and the hypersurface $H$ represents the set of one-side constraints (e.g. a boundary or an obstacle, lifted from the configuration space to the phase space of the system). The problem of their classification was posed by R. B. Melrose in \cite{M}, as a problem of considerable importance, in relation to boundary value problems of pseudo-differential operators, but it is also related to many other variational problems with constraints such as the classical Dirac problem (posed by P. M. Dirac in \cite{Dir}, see also \cite{A0}), the problem of bypassing an obstacle,  the billiard ball problem, and others, most of them translated nowadays in the language of Lagrangian singularities (by V. I. Arnol'd and his school c.f. \cite{A1}, \cite{A2}, \cite{A3}, \cite{A4} and references therein).

Symplectic classification of first occurring singularities of pairs $(f,H)$ is only well understood/known in the 2-dimensional case, c.f. \cite{K} in the analytic and recently \cite{Kir} in the smooth category, where the authors provide a relative analog of the classical isochore Morse lemma (c.f \cite{Ve} and also \cite{F}, \cite{G} for the analytic and \cite{CV} for the smooth case).

Recently an exact normal form for the first occurring singularities of triples $(\omega,f,H)$ (under the whole group of diffeomorphisms) has also been obtained in \cite{KM}, which holds in any dimension and in all the categories as well. This normal form though, despite the fact of being exact, it is only implicit, in the sense that all the functional invariants are hidden within the symplectic form $\omega$. The main motivation for writing this paper arose from the efforts to make this normal form explicit, i.e. to reduce $\omega$ back to its Darboux normal form (\ref{Darboux}) and obtain an exact normal form for the pair $(f,H)$, containing explicitly all the functional invariants of the classification problem. This dual normal form, which gives an answer to Melrose's original problem, is also ``better''  in terms of applications to boundary value problems, where the symplectic form is usually fixed to be the standard symplectic form of the phase space.

As it was mentioned before, the solution of Melrose's problem turned out to depend, through a reduction process, on the symplectic classification of diffeomorphisms. The corresponding results obtained here provide an infinite dimensional analog of the following classical fact from representation theory: the space of all linear symplectic structures is a homogeneous space diffeomorphic to the coset space $\mbox{GL}(2n)/\mbox{Sp}(2n)$. The dimension of this space, equal to $n(2n-1)$, constitutes the first term in the Poincar\'e series of the space of all symplectic structures (appearing also in \cite{D}), intrinsically associated to our classification problems.

The computation of the Poincar\'e series of Hamiltonian systems with constraints, (which lies in the circle of problems posed by Arnol'd in \cite{A6}), as well as the geometric description (without reference to coordinates) of the functional invariants obtained in this paper, are much more difficult problems (for $n\geq 2$) and will be treated in a subsequent paper. Furthermore, we do not discuss the more degenerate cases -singularity classes- but we remark that the methods provided in this paper can be used, with certain modifications, to attack these cases as well.

\section{Main Results}

For economy in the exposition, we will use throughout the paper the following:

\medskip

\noindent NOTATION.  We denote by $I^{\omega}_i$, $i=1,\cdots, 2n-1$, the nested sequence of ideals generated by the first $i$-Darboux coordinate functions of $\omega=(\ref{Darboux})$, but in reverse order, i.e.:
\[I^{\omega}_1=<q_1>,\]
\[I^{\omega}_2=<q_1,p_1>,\]
\[\vdots\]
\[I^{\omega}_{2n-2}=<q_1,p_1,\cdots,q_{n-1},p_{n-1}>,\]
\[I^{\omega}_{2n-1}=<q_1,p_1,\cdots,q_{n-1},p_{n-1},q_n>.\]

\subsection{Local Equivalence and Moduli of Diffeomorphisms in Symplectic Space}

\medskip

Consider a map $\Phi:(\mathbb{R}^{2n},\omega)\rightarrow \mathbb{R}^{2n}$, $\Phi(0)=0$, of maximal rank (a diffeomorphism), written in Darboux coordinates of $\omega=(\ref{Darboux})$ as:
\[\Phi(p,q)=(P_1(p,q),Q_1(p,q)\cdots,P_n(p,q),Q_n(p,q)),\]
where, up to renumeration, we may suppose that the following conditions always hold:
\begin{equation}
\label{con-2}
\partial_{p_i}P_i(0)\ne 0, \quad \partial_{q_i}Q_i(0)\ne 0, \quad i=1,\cdots,n.
\end{equation}
The problem is to find an exact normal form of $\Phi$ under symplectomorphisms of the Darboux normal form $\omega=(\ref{Darboux})$ in the source (no changes of coordinates in the target are allowed).

\begin{thm}
\label{thm-1}
Any diffeomorphism $\Phi$ in the symplectic space $(\mathbb{R}^{2n},\omega)$ can be reduced, by a symplectomorphism of  $\omega=(\ref{Darboux})$, to the normal form:
\begin{equation}
\label{nf-diffeo}
\Phi(p,q)=(p_1,\widetilde{Q}_1(p,q),p_2+\widetilde{P}_2(p,q),\widetilde{Q}_2(p,q),\cdots,p_n+\widetilde{P}_n(p,q),\widetilde{Q}_n(p,q)),
\end{equation}
where $\partial_{q_i}\widetilde{Q}_i(0)\ne 0$, $i=1,\cdots,n$ and the $(2n-1)$ functions of $2n$-variables $\{\widetilde{Q}_i(p,q)\}_{i=1}^{n}$, $\{\widetilde{P}_i(p,q)\}_{i=2}^{n}$, belong to the finitely generated ideals: 
\begin{equation}
\label{Q} 
\{\widetilde{Q}_i\in I^{\omega}_{2i-1}\}_{i=1}^n, \quad \{\widetilde{P}_i\in I^{\omega}_{2i-2}\}_{i=2}^n.
\end{equation}
These functions are functional invariants, i.e. they distinguish between non-equivalent diffeomorphisms.
\end{thm}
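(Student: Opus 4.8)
The plan is to construct the reducing symplectomorphism $\Psi$ step by step, normalizing the components of $\Phi$ one pair $(P_i,Q_i)$ at a time, exploiting the product structure $\mathbb{R}^{2n}=\mathbb{R}^2_{(p_1,q_1)}\times\cdots\times\mathbb{R}^2_{(p_n,q_n)}$ of the Darboux form. The first move is to use condition (\ref{con-2}) on $P_1$: since $\partial_{p_1}P_1(0)\ne 0$, the function $P_1(p,q)$ has non-vanishing differential and can be taken as a new momentum coordinate. I would invoke a relative Darboux-type argument (or a direct generating-function construction) to produce a symplectomorphism $\Psi_1$ of $\omega=(\ref{Darboux})$ carrying $P_1$ to the coordinate function $p_1$; this is where the hypersurface classification in Lemma \ref{lem-1} enters, applied to the non-singular function $f=P_1$. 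After this step $\Phi$ has first component exactly $p_1$, and the ambiguity that remains is exactly the isotropy group of $p_1$ inside the symplectomorphism group, whose structure is given by Lemma \ref{lem-2}; the remaining normalizations must be carried out using only this isotropy subgroup, which is what forces the functional invariants into the nested ideals $I^{\omega}_i$.

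Next, with $P_1\equiv p_1$ fixed, I would normalize $Q_1$. The condition $\partial_{q_1}Q_1(0)\ne 0$ together with the fact that $(P_1,Q_1)=(p_1,Q_1)$ must remain ``compatible'' with the symplectic pairing forces $Q_1$, after acting by an element of the isotropy of $p_1$, into the ideal $I^{\omega}_1=\langle q_1\rangle$ (hence $\widetilde Q_1\in I^{\omega}_1$), with $\partial_{q_1}\widetilde Q_1(0)\ne 0$ and all dependence on the remaining variables appearing only as a $q_1$-multiple. Then I proceed inductively: assuming the first $i-1$ pairs have been brought to the form displayed in (\ref{nf-diffeo}) with the ideal memberships (\ref{Q}), the residual freedom is the subgroup of symplectomorphisms fixing $p_1,\widetilde Q_1,\dots$ through index $i-1$; a further relative normal-form/homotopy (path) argument along the lines of Lemma \ref{lem-1}, carried out inside this progressively smaller isotropy group, replaces $P_i$ by $p_i+\widetilde P_i$ with $\widetilde P_i\in I^{\omega}_{2i-2}$ (it can only be corrected modulo the previously-fixed coordinates $q_1,p_1,\dots,q_{i-1},p_{i-1}$, which is precisely the generating set of $I^{\omega}_{2i-2}$) and then $Q_i$ by $\widetilde Q_i\in I^{\omega}_{2i-1}$ with $\partial_{q_i}\widetilde Q_i(0)\ne 0$. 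The nesting of the ideals is thus a bookkeeping consequence of how much of the coordinate system has already been frozen at each stage.

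Finally, for the statement that the $\{\widetilde Q_i\},\{\widetilde P_i\}$ are genuine functional invariants — i.e.\ distinguish non-equivalent diffeomorphisms — I would argue that two normal forms related by a symplectomorphism $\Psi$ of $\omega$ must have $\Psi$ lying in the common isotropy of all the frozen coordinate functions $p_1,q_1,\dots$ appearing in (\ref{nf-diffeo}); by Lemma \ref{lem-2} this isotropy is trivial (or acts trivially on the data in question), so $\widetilde Q_i=\widetilde Q_i'$ and $\widetilde P_i=\widetilde P_i'$ identically. Equivalently, one exhibits these functions as the restrictions of $\Phi$-pullbacks of coordinates to suitable $\Psi$-invariant submanifolds, which are manifestly preserved under the allowed equivalence.

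The main obstacle I expect is the convergence/smoothness of the successive reducing transformations and, more subtly, verifying at each inductive step that the correction to $P_i$ and $Q_i$ can indeed be absorbed using \emph{only} the shrinking isotropy group rather than the full symplectomorphism group — in other words, proving the homological/cohomological surjectivity statement (the relative analog of Moser's deformation argument) that underlies Lemma \ref{lem-1} in the presence of the already-fixed coordinate functions. Getting the ideal memberships (\ref{Q}) sharp, rather than merely ``in some ideal,'' is the delicate point, and it is exactly here that conditions (\ref{con-2}) and the precise structure of the isotropy groups from Lemma \ref{lem-2} have to be used in full.
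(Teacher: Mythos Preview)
Your strategy is the paper's. Two sharpening remarks: Lemma~\ref{lem-1} normalises the \emph{pair} $(P_1,\{Q_1=0\})$ to $(p_1,\{q_1=0\})$ in a single stroke (so $P_1\mapsto p_1$ and $Q_1\in I^{\omega}_1$ come together, not sequentially), and Lemma~\ref{lem-2} shows that the isotropy of $(p_1,\{q_1=0\})$ inside $\mbox{Symp}(2n)$ is \emph{exactly} $\mathrm{id}_{(p_1,q_1)}\times\mbox{Symp}(2n-2)$, so the inductive step is nothing more than restricting $(P_2,Q_2,\dots,P_n,Q_n)$ to the symplectic subspace $\{p_1=q_1=0\}$ and reapplying Lemma~\ref{lem-1} there in dimension $2(n-1)$. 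The whole procedure terminates after $n$ such restrict-and-repeat steps, the final isotropy is literally the identity map (yielding the invariance claim directly), and the convergence, Moser-homotopy, and cohomological obstructions you anticipate never arise.
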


\begin{remark}
\label{rem-0}
\normalfont{
The functional invariants obtained in the theorem give a natural parametrisation of the orbit space:
\[M=\mbox{Diff}(2n)/\mbox{Symp}(2n),\]
where we denote by $\mbox{Diff}(2n)$ the group of diffeomorphisms of $\mathbb{R}^{2n}$ and by $\mbox{Symp}(2n)$ the subgroup of symplectomorphisms of the Darboux normal form $\omega=(\ref{Darboux})$. To compute their exact number, one has to write each function $\widetilde{P}_i$, $\widetilde{Q}_i$ as a sum of the generators of the corresponding ideal specified in (\ref{Q}) with coefficients functions $P_{ij}$, $Q_{ij}$. So for example, $\widetilde{Q}_1(p,q)=q_1Q_{11}(p,q)$, $Q_{11}(0)\ne 0$, $\widetilde{P}_2(p,q)=p_2+q_1P_{21}(p,q)+p_1P_{22}(p,q)$ and so on. In that way we obtain in total $n(2n-1)$ functional invariants $P_{ij}$, $Q_{ij}$ which are functions of $2n$-variables. If we denote by $M_k=J^k\mbox{Diff}(2n)/\mbox{Symp}(2n)$ the orbit space of $k$-jets of diffeomorphisms under the action of the symplectomorphism group, it immediately follows that the Poincar\'e series\footnote{recall that the Poincar\'e series of a graded vector space $V=\oplus_{k\geq 0}V_k$ is the formal series $P_V(t)=\sum_{k=0}^{\infty}\dim V_kt^k$.}  of $M$:
\[P_M(t)=\dim M_0+\sum_{k=1}^{\infty}(\dim M_k-\dim M_{k-1})t^k,\]
is the rational function:
\[P_M(t)=t\frac{n(2n-1)}{(1-t)^{2n}}.\]
}
\end{remark}
 
The nature of this parametrisation can be explained heuristically, without reference to Theorem \ref{thm-1},  as follows: by Darboux's theorem the group $\mbox{Diff}(2n)$ acts transitively on the space $\Omega^2_{S}(2n)$ of all symplectic structures, while $\mbox{Symp}(2n)$ is the isotropy subgroup of exactly one of them, namely of $\omega=(\ref{Darboux})$. Thus, the orbit space $M$ should be parametrised by the space  $\Omega^2_{S}(2n)$ of all symplectic structures, which becomes in that way an infinite dimensional homogeneous space. 

Theorem \ref{thm-1} makes this statement precise: indeed, the problem of classification of diffeomorphisms $\Phi$ by symplectomorphisms of $\omega=(\ref{Darboux})$ is equivalent to the problem of classification of pairs $(\omega,\Phi)$ under the whole group of diffeomorphisms, which is in turn equivalent to the classification of symplectic structures under diffeomorpshisms preserving the identity mapping $\Phi=Id$. The latter are no other than the identity mapping itself and thus the functional invariants are exactly the functions which parametrise the space $\Omega^2_S(2n)$ of all symplectic structures:
\begin{cor}
\label{cor-1}
Any symplectic form $\omega$ in $\mathbb{R}^{2n}$ can be written as:
\begin{equation}
\label{par-symp}
\omega=dp_1\wedge d\overline{Q}_1+\sum_{i=2}^nd(p_i+\overline{P}_i)\wedge d\overline{Q}_i,
\end{equation}
where $\partial_{q_i}\overline{Q}_i(0)\ne 0$, $i=1,\cdots,n$ and the $(2n-1)$-functions of $2n$-variables $\{\overline{Q}_i(p,q)\}_{i=1}^n$, $\{\overline{P}_i(p,q)\}_{i=2}^n$, belong to the finitely generated ideals:
\begin{equation}
\label{omega}
\{\overline{Q}_i\in I^{\omega}_{2i-1}\}_{i=1}^n, \quad \{\overline{P}_i\in I^{\omega}_{2i-2}\}_{i=2}^n.
\end{equation}
In particular, the Poincar\'e series $P_S(t)$ of the space of all symplectic structures $\Omega_S^{2}(2n)$ is:
\[P_S(t)=\frac{n(2n-1)}{(1-t)^{2n}}.\]
\end{cor}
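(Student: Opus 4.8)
The plan is to derive the corollary directly from Theorem \ref{thm-1} by the heuristic reduction sketched just before the statement, made rigorous. First I would observe that classifying diffeomorphisms $\Phi:(\mathbb{R}^{2n},\omega_0)\to\mathbb{R}^{2n}$ up to symplectomorphisms of the fixed Darboux form $\omega_0=(\ref{Darboux})$ in the source is the same as classifying \emph{pairs} $(\omega_0,\Phi)$ up to the full diffeomorphism group acting simultaneously on source and target: a diffeomorphism $\Psi$ of the source with $\Psi^*\omega_0=\omega_0$ is exactly the data needed to move $\Phi$ to $\Phi\circ\Psi$ while keeping the form standard. Pulling everything back by $\Phi^{-1}$ (allowed, since $\Phi$ is a germ of diffeomorphism), the pair $(\omega_0,\Phi)$ becomes the pair $((\Phi^{-1})^*\omega_0,\,\mathrm{Id})$. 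Thus the orbit space $M=\mathrm{Diff}(2n)/\mathrm{Symp}(2n)$ is identified with the set of pairs $(\omega,\mathrm{Id})$, where $\omega=(\Phi^{-1})^*\omega_0$ ranges over \emph{all} germs of symplectic forms (by Darboux, every symplectic germ is such a pullback), modulo diffeomorphisms fixing $\mathrm{Id}$ — and the only diffeomorphism fixing the identity map is the identity itself. Hence $M\cong\Omega^2_S(2n)$ as sets, canonically.

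Next I would transport the normal form (\ref{nf-diffeo}) of Theorem \ref{thm-1} through this identification. Under the correspondence above, putting $\Phi$ into the normal form (\ref{nf-diffeo}) corresponds to writing the associated symplectic form; concretely, if $\Phi$ is already in the form (\ref{nf-diffeo}) then $(\Phi^{-1})^*\omega_0$ is obtained by expressing $\omega_0=dp\wedge dq$ in the coordinates given by the components of $\Phi$. Since $\Phi$'s first component is $p_1$, its second is $\widetilde Q_1$, its third is $p_2+\widetilde P_2$, and so on, substituting gives precisely
\[
\omega = dp_1\wedge d\widetilde Q_1+\sum_{i=2}^n d(p_i+\widetilde P_i)\wedge d\widetilde Q_i,
\]
which is (\ref{par-symp}) after renaming $\widetilde Q_i\mapsto \overline Q_i$, $\widetilde P_i\mapsto\overline P_i$. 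The membership conditions $\overline Q_i\in I^\omega_{2i-1}$, $\overline P_i\in I^\omega_{2i-2}$ and the nondegeneracy $\partial_{q_i}\overline Q_i(0)\neq 0$ are inherited verbatim from (\ref{Q}); nondegeneracy of $\omega$ as a $2$-form is automatic since $\Phi$ is a diffeomorphism (the displayed expression is $\Phi^*$... or rather $(\Phi^{-1})^*$ of a nondegenerate form), and closedness is automatic since it is visibly exact-coefficient — it is $d$ of a $1$-form, hence closed. The fact that the $\overline Q_i,\overline P_i$ are genuine invariants, i.e. distinct choices give non-equivalent forms, is exactly the last sentence of Theorem \ref{thm-1} transported along the bijection $M\cong\Omega^2_S(2n)$.

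Finally, for the Poincar\'e series assertion, I would argue as in Remark \ref{rem-0}: writing each $\overline Q_i$, $\overline P_i$ as a combination of the generators of its ideal with coefficient functions yields $n(2n-1)$ free functional parameters, each a germ of a smooth function of $2n$ variables, whose space of $k$-jets has dimension $n(2n-1)\binom{2n-1+k}{2n-1}$... more precisely, the graded pieces contribute $n(2n-1)$ copies of the local ring $\mathbb{R}[[p,q]]$, whose Poincar\'e series is $(1-t)^{-2n}$; hence $P_S(t)=n(2n-1)(1-t)^{-2n}$. Note the consistency check with Remark \ref{rem-0}: $P_M(t)=t\,P_S(t)$, reflecting that $M$ and $\Omega^2_S(2n)$ differ by the single ``shift'' coming from the $0$-jet of $\Phi$ (a linear diffeomorphism modulo the linear symplectic group, of dimension $n(2n-1)=\dim\mathrm{GL}(2n)/\mathrm{Sp}(2n)$), whereas the $0$-jet of a symplectic form is rigid (all symplectic forms are linearly equivalent, so $\dim M_0=0$ for the $\omega$-side).

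The main obstacle I anticipate is not any computation but making the identification $M\cong\Omega^2_S(2n)$ precise at the level of germs and jets — in particular checking that the action of $\mathrm{Symp}(2n)$ on diffeomorphisms corresponds bijectively, orbit-by-orbit and jet-level by jet-level, to the (trivial) action on symplectic forms fixing $\mathrm{Id}$, so that the ``invariants distinguish orbits'' clause of Theorem \ref{thm-1} really does descend. Once that correspondence is nailed down, the normal form (\ref{par-symp}) and the Poincar\'e series drop out formally.
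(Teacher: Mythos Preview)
Your proposal follows essentially the same route as the paper: pass from the pair $(\omega_0,\Phi)$, with $\Phi$ already in the normal form of Theorem~\ref{thm-1}, to the pair $(\omega,\mathrm{Id})$ via the diffeomorphism $\Phi^{-1}$, and then read off the Poincar\'e series by the counting argument of Remark~\ref{rem-0}. One small slip worth flagging: substituting the components of $\Phi$ into $dp\wedge dq$ computes $\Phi^*\omega_0$, not $(\Phi^{-1})^*\omega_0$; the symplectic form you actually obtain under the identification $M\cong\Omega^2_S(2n)$ has as its $\overline Q_i,\overline P_i$ the components of $\Phi^{-1}$, which is why the paper speaks of ``appropriate functions $\overline Q_i,\overline P_i$'' rather than identifying them with $\widetilde Q_i,\widetilde P_i$. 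This does not affect the argument, since the inverse of a diffeomorphism in the form~(\ref{nf-diffeo}) is again of that form (the ideal conditions~(\ref{Q}) are stable under inversion, as one checks component by component), and that is precisely the content of the paper's ``obviously''.
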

\begin{proof}
The inverse diffeomorphism $\Phi^{-1}$ of Theorem \ref{thm-1} brings $\Phi$ to the identity and sends the Darboux normal form $\omega=(\ref{Darboux})$ to the desired form $\omega=(\ref{par-symp})$ for appropriate functions $\overline{Q}_i$, $\overline{P}_i$. Since the functions $\widetilde{Q}_i$, $\widetilde{P}_i$ satisfy (\ref{Q}), the functions $\overline{Q}_i$, $\overline{P}_i$ obviously satisfy (\ref{omega}). The same argument following Theorem \ref{thm-1} gives also the formula for the Poincar\'e series.
\end{proof}

\begin{remark}
\label{rem-1}
\normalfont{The parametrisation of the space $\Omega^2_S(2n)$  by $(2n-1)$-functions of $2n$-variables in the ideals $\{I_i^{\omega}\}_{i=1}^{2n-1}$ can in fact be obtained without reference to Theorem \ref{thm-1}; it follows from the observation that a primitive 1-form $a$ of a symplectic form $\omega$ is uniquely defined by $\omega$ modulo the choice of the differential of some function $h$ (since $\omega=da=d(a+dh)$, which can be chosen in such a way, so as to reduce the coefficients of the 1-form $a$ in the prescribed ideals. What is less trivial though is to choose the coordinate expression of the form $a$ and the function $h$ correctly, so as to obtain the desired expression (\ref{par-symp}) for $\omega$, which contains explicitly the normal form of the diffeomorphism $\Phi$ sending $\omega$ to its standard Darboux normal form (\ref{Darboux}).  
In any case, using Theorem \ref{thm-1} and its Corollary \ref{cor-1} above, we immediately obtain the formula:
\[P_M(t)=tP_S(t),\]
which is the infinite dimensional analog of the well known fact from representation theory:
\[\dim \mbox{GL}(2n)/\mbox{Sp}(2n)=\dim J^0\Omega^2_S(2n)=n(2n-1).\]
}
\end{remark}

\subsection{First Occurring Singularities of Hamiltonian Systems with One-Sided Constraints}

In this section we will work in $\mathbb{R}^{2n+2}$, $n\geq 1$, for notational reasons. The planar case $n=0$ has been treated extensively in \cite{Kir},  \cite{K} and will only be discussed here in Remark \ref{rem-2D} below. We identify Hamiltonian systems with constraints with triples $(\omega,f,H)$ where $\omega$ is again a symplectic form, $f$ is a non-singular function, $f(0)=0$, $df(0)\ne 0$, and $H=\{h=0\}$ is a smooth hypersurface, $h(0)=0$, $dh(0)\ne 0$. Fix the Darboux  normal form of $\omega$: 
\begin{equation}
\label{Darboux-1}
\omega=dx\wedge dy+dp\wedge dq.
\end{equation}
The purpose is to classify first occurring singularities of pairs $(f,H)$ by symplectomorphisms of $\omega=(\ref{Darboux-1})$. These singularities are distinguished by the same conditions as for the corresponding pairs of glancing hypersurfaces $F=\{f=0\}$, $H=\{h=0\}$ considered by Melrose in \cite{M}:

\begin{equation}
\label{glancing-1}
df\wedge dh(0)\ne 0 \quad (\mbox{for $n\geq 1$}),
\end{equation}
\begin{equation}
\label{glancing-2}
\{f,h\}(0)=0,\quad \{f,\{f,h\}\}(0)\ne 0,\quad \{h,\{f,h\}\}(0)\ne 0.
\end{equation}
We denote by $S_1$ their singularity class.

The main result in \cite{M} is that in the $C^{\infty}$-category, any pair of glancing hypersurfaces can be reduced by a symplectomorphism of $\omega=(\ref{Darboux-1})$ to the simple normal form:
\begin{equation}
\label{nf-M}
F=\{y=0\}, \quad H=\{x^2+y+p_1=0\}.
\end{equation}
In the analytic category, classification of glancing hypersurfaces is well known to contain functional moduli (c.f. \cite{Vor}). 

In the same paper \cite{M} the author noticed that replacing one of the hypersurfaces, say $F=\{f=0\}$, by its defining function $f$, makes the classification problem substantially more difficult: moduli occur already from the classification of 2-jets. The main result in this direction is the following:

\begin{thm}
\label{thm-2}
In the space of $2n$-jets of singular pairs $(f,H)\in S_1$ in $(\mathbb{R}^{2n+2},\omega)$, $n\geq 1$, there exists an open set $U$ such that any pair with $j^{2n}(f,H)\in U$ can be reduced by a symplectomorphism of $\omega=(\ref{Darboux-1})$ to the normal form:
\begin{equation}
\label{thm-nf-fH}
f= y,  \quad H=\{x^2+g(y,p,q)=0\}, 
\end{equation}
where
\begin{equation}
\label{thm-nf-R}
g(y,p,q)=r(y)+p_1+\sum_{i=2}^n(p_i+\widetilde{P}_i(p,q))y^{2i-2}+\sum_{i=1}^{2n-1}\widetilde{Q}_i(p,q)y^{2i-1}+\phi(y,p,q)y^{2n}=0\},
\end{equation}
with $r'(0)\ne 0$, $\phi(y,0,0)=0$, $\partial_{q_i}\widetilde{Q}_i(0)\ne 0$, $i=1,\cdots,n$, and the $(2n-1)$ functions $\{\widetilde{P}_i(p,q)\}_{i=2}^n$,  $\{\widetilde{Q}_i(p,q)\}_{i=1}^n$ belonging in the ideals:
\begin{equation}
\label{r-ideals}
\{\widetilde{P}_i\in I_{2i-2}\}_{i=2}^n, \quad \{\widetilde{Q}_i\in I_{2i-1}\}_{i=1}^n.
\end{equation}
The function of $1$-variable $r$, the function of $(2n+1)$-variables $\phi$, and the $(2n-1)$ functions of $2n$-variables $\{\widetilde{P}_i\}_{i=2}^{n}$, $\{\widetilde{Q}_i\}_{i=1}^n$ in the corresponding ideals (\ref{r-ideals}) above, are functional invariants. 
\end{thm}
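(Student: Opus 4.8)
The plan is to deduce the theorem from the classification of diffeomorphisms (Theorem \ref{thm-1}) by means of the reduction announced as Lemma \ref{lem-3}. First I would normalise $f$: since $f$ is non-singular, a symplectomorphism of $\omega=(\ref{Darboux-1})$ brings it to $f=y$ (Darboux, or the homotopy method). With $f=y$ frozen, the glancing conditions (\ref{glancing-1})--(\ref{glancing-2}) become $\partial_x h(0)=0$, $\partial_x^2 h(0)\ne 0$ and $\{h,\partial_x h\}(0)\ne 0$. I would then straighten $H$ using only the symplectomorphisms that preserve $y$; these form a very concrete group --- up to the Hamiltonian flows of functions $G(y,p,q)$ in the $(p,q)$-directions (which act as a $y$-parametrised family $\psi_y$ of symplectomorphisms of $(\mathbb{R}^{2n},dp\wedge dq)$) such a map is just an $x$-shift $x\mapsto x-\kappa(y,p,q)$, and, crucially, there is no room to rescale $x$ because $y=f$ is frozen. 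Using the $x$-shift to kill the linear term in $x$, and then straightening the involution that the characteristic foliation of $F=\{f=0\}$ (the $x$-lines) induces on $H$ --- exactly as in Melrose's construction of the normal form for glancing hypersurfaces, but carried out in the category that fixes $f=y$ --- one reduces $H$ to $\{x^2+g(y,p,q)=0\}$, with $g(0)=0$ and, by the third glancing condition, $\partial_y g(0)\ne 0$. This is the geometric content of Lemma \ref{lem-3}.

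Next I would read off a diffeomorphism of $\mathbb{R}^{2n}$ from $g$. Expanding $\widetilde{g}(y,p,q):=g(y,p,q)-g(y,0,0)$ in powers of $y$,
\[\widetilde{g}=P_1(p,q)+Q_1(p,q)\,y+P_2(p,q)\,y^2+\cdots+Q_n(p,q)\,y^{2n-1}+\phi(y,p,q)\,y^{2n},\]
the coefficients of $1,y,\dots,y^{2n-1}$ assemble into a germ $\Phi=(P_1,Q_1,\dots,P_n,Q_n)\colon(\mathbb{R}^{2n},0)\to(\mathbb{R}^{2n},0)$ whose linear part at the origin depends only on the $2n$-jet of $(f,H)$. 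I would take $U$ to be the (open, symplectomorphism-invariant, non-empty) set of $2n$-jets for which $\Phi$ is a local diffeomorphism satisfying (\ref{con-2}) up to renumbering. What is left over is the one-variable function $r(y):=g(y,0,0)$, which has $r'(0)=\partial_y g(0)\ne 0$, and the tail $\phi$, which automatically satisfies $\phi(y,0,0)=0$.

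The key observation is that among the $y$-preserving symplectomorphisms, those that \emph{also} preserve the shape $\{x^2+(\cdot)=0\}$ of $H$ reduce to the $y$-independent ones $(x,y,p,q)\mapsto(x,y,\psi(p,q))$ with $\psi$ a symplectomorphism of $(\mathbb{R}^{2n},dp\wedge dq)$ fixing the origin: preserving the shape forces $\kappa=0$ (otherwise a linear-in-$x$ term reappears), and the symplectic condition then forces the family $\psi_y$ to be independent of $y$. Such a symplectomorphism acts on $g$ by $g\mapsto g(y,\psi(p,q))$, hence on $\Phi$ by $\Phi\mapsto\Phi\circ\psi$ --- precisely the equivalence relation of Theorem \ref{thm-1} --- while leaving $r$ unchanged (since $\psi(0)=0$) and preserving $\phi(y,0,0)=0$. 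Applying Theorem \ref{thm-1} to $\Phi$ reduces it to the normal form (\ref{nf-diffeo}), which translates verbatim into the polynomial part of (\ref{thm-nf-R}), the ideal memberships (\ref{r-ideals}), and the conditions $\partial_{q_i}\widetilde{Q}_i(0)\ne 0$; this yields (\ref{thm-nf-fH})--(\ref{thm-nf-R}). That the resulting functions are functional invariants follows for $\{\widetilde{P}_i,\widetilde{Q}_i\}$ from the corresponding statement in Theorem \ref{thm-1} (the reduction identifies them with the diffeomorphism $\Phi$ canonically attached to $(f,H)$), and for $r$ and $\phi$ from the fact that, the form $\{x^2+g=0\}$ being canonical up to $y$-independent symplectomorphisms of $\mathbb{R}^{2n}$, the germ $g$ --- and with it $r=g(y,0,0)$ and the tail $\phi$ --- is determined up to $g\mapsto g(y,\psi(p,q))$, which fixes $r$ and the condition $\phi(y,0,0)=0$.

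I expect the main obstacle to be the straightening step in the first paragraph: realising the Morse-type normalisation of $H$ \emph{symplectically} under the constraint $f=y$. The rigidity caused by the impossibility of rescaling $x$ means one cannot invoke a bare (even isochore) Morse lemma, but must exploit the glancing geometry --- the characteristic involution of $F$ --- to produce the exact $x^2$, and must then verify carefully that only $y$-independent symplectomorphisms survive and that they act on the extracted data exactly as the equivalence of Theorem \ref{thm-1}. Its analytic core, like that of the ideal reductions in Theorem \ref{thm-1}, is a cohomological / homotopy-method argument, here run over the parameter $y$.
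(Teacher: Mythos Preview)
Your reduction to Theorem \ref{thm-1}---extracting the diffeomorphism $\Phi=(P_1,Q_1,\dots,P_n,Q_n)$ from the $y$-expansion of $g$ and noting that the residual symmetries act as $\Phi\mapsto\Phi\circ\psi$---is exactly the paper's strategy, and your computation that the symplectomorphisms preserving both $f=y$ and the shape $\{x^2+g=0\}$ are the $y$-independent maps $(x,y,p,q)\mapsto(x,y,\psi(p,q))$ is correct and is the heart of the argument. The difference is only in how the intermediate form $f=y$, $H=\{x^2+g(y,p,q)=0\}$ is reached. You keep $\omega$ Darboux throughout and try to complete the square \emph{symplectically}; the paper instead normalises the whole triple $(\omega,f,H)$ under arbitrary diffeomorphisms: it rectifies $Z_f=\partial_x$ (so $\omega=df\wedge dx+\widehat\omega$ with $\widehat\omega$ quasi-symplectic and $f=f(y,p,q)$), then kills the $x$-linear term by the plain diffeomorphism $x\mapsto x-a/2$---no symplectic constraint, since $\omega$ is allowed to move---simultaneously reducing the critical hypersurface $C_{f,h}=\{\{f,h\}=0\}$ to $\{x=0\}$. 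The diffeomorphisms preserving $(\partial_x,\{x=0\})$ fix $x$, so the problem collapses to classifying $(f,g)$ in the quasi-symplectic space $(\mathbb{R}^{2n+1},\widehat\omega)$, which is Lemma \ref{lem-3}; odd-dimensional Darboux reduces $(f,\widehat\omega)$ to $(y,dp\wedge dq)$ and Theorem \ref{thm-1} finishes. Thus your ``main obstacle'' dissolves with the paper's ordering, and no Melrose-type involution argument is needed. (Equivalently in your set-up: the naive shift $x\mapsto x-a/2$ is not symplectic---a pure $x$-shift by $\kappa(y,p,q)$ preserves $\omega$ only when $\kappa=\kappa(y)$, so your description of the $y$-preserving group needs a small correction---but it takes $\omega$ to $dx\wedge dy+\widehat\omega$ with $\widehat\omega$ quasi-symplectic on $(y,p,q)$; since $dy\wedge\widehat\omega^{\,n}\neq0$, odd-dimensional Darboux provides $(A,B)$ with $\widehat\omega=dA\wedge dB$ and $(y,A,B)$ a chart, and then $(x,y,p,q)\mapsto(x,y,A,B)$ restores $\omega$ while fixing $y$ and $\{x=0\}$. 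This is precisely the paper's reduction, recast in your coordinates.)
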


\begin{remark}
\normalfont{
As in Remark \ref{rem-0} following Theorem \ref{thm-1}, the conditions (\ref{r-ideals}) define in total $n(2n-1)$ functional invariants of $2n$-variables $P_{ij}$, $Q_{ij}$ corresponding to the functions $\widetilde{P}_i$, $\widetilde{Q}_i$.
}
\end{remark}

\begin{remark}
\normalfont{
In \cite{KM} the authors have obtained the following exact normal form for the first occurring singularities of generic triples $(\omega,f,H)$:
\begin{equation}
\label{nf-KM}
\omega=dx\wedge d\widehat{f}(y,p,q)+\widetilde{\omega}, \quad f=\widehat{f}(y,p,q), \quad H=\{x^2+y=0\},
\end{equation}
where
\begin{equation}
\label{nf-KM-f}
\widehat{f}=\widehat{r}(y)+\sum_{i=1}^n(p_iy^{2i-2}+q_iy^{2i-1})+\psi(y,p,q)y^{2n},
\end{equation}
with the functional invariants $\widehat{r}(y)$, $\widehat{r}'(0)\ne 0$, $\psi(y,p,q)$, $\phi(y,0,0)=0$, and the symplectic form $\widetilde{\omega}$ in $\mathbb{R}^{2n}(p,q)$ being a functional invariant as well. 
To deduce this normal form  from normal form (\ref{thm-nf-fH})-(\ref{thm-nf-R}) of Theorem \ref{thm-2}, one has to perform the following operations: first consider the diffeomorphism $\Phi(p,q)=(p_1, \widetilde{Q}_1(p,q),\cdots,p_n+\widetilde{P}_n(p,q),\widetilde{Q}_n(p,q))$ in the symplectic space $(\mathbb{R}^{2n},dp\wedge dq)$. Using Corollary \ref{cor-1} of Theorem \ref{thm-1} we can bring $\Phi$ to the identity and send $dp\wedge dq$ to the form $\widetilde{\omega}$ (which moreover has the form $\widetilde{\omega}=(\ref{par-symp}))$. This gives the following normal form for the triple $(\omega,f,H)$:
\[\omega=dx\wedge dy+\widetilde{\omega}, \quad f=y,\]
\[H=\{x^2+r(y)+\sum_{i=1}^n(p_iy^{2i-2}+q_iy^{2i-1})+\phi(y,p,q)y^{2n}=0\}.\]
Since $r'(0)\ne 0$, the diffeomorphism:
\[r(y)+\sum_{i=1}^n(p_iy^{2i-2}+q_iy^{2i-1})+\phi(y,p,q)y^{2n}\mapsto y,\]
brings the above normal form to the desired normal form (\ref{nf-KM})-(\ref{nf-KM-f}).  
}
\end{remark}

\begin{remark}[2D-Case]
\label{rem-2D}
\normalfont{
Notice that restriction of the normal form $(\omega,f,H)=((\ref{Darboux-1}),(\ref{thm-nf-fH})-\ref{thm-nf-R})$ on the $(x,y)$-plane $p=q=0$ gives the normal form: 
\begin{equation}
\label{nf-RMD-new}
\omega=dx\wedge dy, \quad f=y, \quad H=\{x^2+r(y)=0\},
\end{equation}
with the function $r(y)$, $r'(0)\ne 0$. It is not difficult to see that this is indeed an exact normal form for first occurring singularities of pairs $(f,H)\in S_1$ on the plane $(\mathbb{R}^2,\omega)$, with the functional invariant $r(y)$: indeed, following the proof of Theorem \ref{thm-2} in Section 4, we can reduce the pair $(\omega,f)$ to its standard normal form $(dx\wedge dy,y)$ and the curve $H$ to the form $H=\{x^2+a(y)x+b(y)=0\}$, for some functions $a$, $b$,  vanishing at the origin and $b'(0)\ne 0$. If we denote by $C_{f,h}=\{\{f,h\}=0\}=\{2x+a(y)=0\}$ the critical curve\footnote{this critical hypersurface is invariantly associated to the triple $(\omega,f,H)$, and reducing it to normal form will also play an important role in the proof of Theorem \ref{thm-2}.}, we see that the diffeomorphism $x\mapsto x-a(y)/2$ preserves the pair $(dx\wedge dy,y)$, sends the critical curve to $C_{f,h}=\{x=0\}$ and the curve $H$ to the desired normal form $H=\{x^2+r(y)=0\}$, where $r'(0)\ne 0$. The fact that $r$ is a functional invariant follows now from the fact that any diffeomorphism of the triple $(\omega,f,C_{f,h})=(dx\wedge dy, y, \{x=0\})$ is necessarily the identity. 

Notice that since $r'(0)\ne 0$ the function $r$ defines a diffeomorphism on the line $\mathbb{R}$. If we denote by $\widehat{r}=r^{-1}$ the inverse diffeomorphism, then the change of coordinates $y\mapsto \widehat{r}(y)$ brings the normal form (\ref{nf-RMD-new}) to the normal form:
\begin{equation}
\label{nf-RMD-KM}
\omega=dx\wedge d\widehat{r}(y), \quad f=\widehat{r}(y), \quad H=\{x^2+y=0\},
\end{equation} 
which coincides with the normal form (\ref{nf-KM})-(\ref{nf-KM-f}) restricted on the $(x,y)$-plane $p=q=0$. This normal form was also obtained in \cite{KM}.  

The normal forms obtained here differ from the classical normal form of the relative isochore Morse lemma in \cite{Kir}, \cite{K}:
\begin{equation}
\label{isochore}
\omega=dx\wedge dy, \quad f=\phi(y+x^2), \quad H=\{y=0\}, \quad \phi'(0)\ne 0,
\end{equation} 
but their proof is much shorter. We leave to the reader the (rather not so easy) exercise to find the diffeomorphism sending the normal form (\ref{nf-RMD-KM}) to the classical normal form (\ref{isochore}) above (hint: one may use the construction in \cite{F} for the ordinary isochore Morse lemma, used also in \cite{K} for the relative case). 
}
 \end{remark}


\section{Proof of Thorem \ref{thm-1}} 
\subsection{Auxiliary Lemmas: Non-Singular Hamiltonian Systems with One-Sided Constraints}

Here we work in symplectic space $(\mathbb{R}^{2n},\omega)$.  We identify Hamiltonian systems with pairs $(\omega,P)$, where $P$ is a function (the Hamiltonian), $P(0)=0$ . Let now $\mathcal{Q}$ be a smooth hypersurface in the symplectic space $(\mathbb{R}^{2n},\omega,P)$. We say that the pair $(P,\mathcal{Q})$ is non-singular if the hypersurface $\mathcal{Q}$ is transversal to the Hamiltonian vector field $Z_P$ of $P$. This implies that for any function $Q$, $dQ(0)\ne 0$, defining $\mathcal{Q}=\{Q=0\}$, the following condition holds: 
\[\{P,Q\}(0)\ne 0.\]   
The triple $(\omega,P,\mathcal{Q})$ can be viewed (in terms of the previous section) as defining a non-singular Hamiltonian system $(\omega,P)$ with one-sided constraints $\mathcal{Q}$. 

\medskip

\noindent NOTATION. Here and below we denote by $g( \widehat{\cdot} )$ any function (map) $g$ which does not depend on the coordinates which are under the  `` $\widehat{}$ '' symbol.

\begin{lem}[\cite{A1},\cite{KM},\cite{M}]
\label{lem-1}
Any non-singular  pair $(P,\mathcal{Q})$ in the symplectic space $(\mathbb{R}^{2n},\omega)$ can be reduced, by symplectomorphisms of the Darboux normal form $\omega=(\ref{Darboux})$, to the normal form:
\begin{equation}
\label{nf-S0}
\quad P=p_1, \quad \mathcal{Q}=\{q_1=0\}.
\end{equation}
\end{lem}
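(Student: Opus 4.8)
The plan is to realize the normal form (\ref{nf-S0}) as a two-stage application of standard symplectic reduction arguments, exploiting the fact that the pair $(P,\mathcal{Q})$ is non-singular, i.e. $\{P,Q\}(0)\ne 0$ for any defining function $Q$ of $\mathcal{Q}$. First I would forget the hypersurface for a moment and reduce the Hamiltonian $P$ alone. Since $dP(0)\ne 0$, the classical Darboux--Givental (or relative Darboux) argument lets us choose a symplectomorphism of $\omega=(\ref{Darboux})$ taking $P$ to the linear function $p_1$: concretely, complete $P$ to a full system of symplectic coordinates using the flow of $Z_P$ and a transversal Lagrangian section, so that $\Psi_1^*\omega=\omega$ and $\Psi_1^*P=p_1$. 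This is the ``half'' of Darboux's theorem that produces one canonically conjugate pair from a single submersive function.

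After this first step we have $P=p_1$ and $\mathcal{Q}=\{Q=0\}$ for some new function $Q$ with $dQ(0)\ne 0$ and, crucially, $\{p_1,Q\}(0)=\partial_{q_1}Q(0)\ne 0$ (the non-singularity condition is preserved because it is intrinsic). So $Q$ is submersive in the $q_1$-direction, and by the implicit function theorem $\mathcal{Q}=\{q_1=\varphi(\widehat{q_1})\}$ for a function $\varphi$ not depending on $q_1$; shifting $q_1\mapsto q_1-\varphi$ we may assume $\mathcal{Q}=\{q_1=g(\widehat{q_1})\}$ with $g(0)=0$ and in fact, after the implicit-function normalization, $\mathcal{Q}=\{q_1=0\}$ up to a remaining coordinate change that fixes $p_1$. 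The second stage is therefore to find a symplectomorphism $\Psi_2$ of $\omega=(\ref{Darboux})$ that simultaneously fixes $p_1$ and straightens $\mathcal{Q}$ to $\{q_1=0\}$. The natural device is to use a Hamiltonian flow whose Hamiltonian is independent of $q_1$ (so that it Poisson-commutes with $p_1$ and hence preserves $P=p_1$) and whose time-one map carries the current hypersurface to the coordinate hyperplane; alternatively one invokes the relative Darboux theorem for the coisotropic/transversal pair, noting that the pair $(\{p_1=0\},\mathcal{Q})$ meets transversally along a symplectic submanifold and all such configurations are symplectically equivalent.

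The step I expect to be the main obstacle is the compatibility of the two stages: after normalizing $P=p_1$ we must not destroy this when straightening $\mathcal{Q}$, so the second symplectomorphism has to be constrained to the centralizer of $p_1$ in $\mathrm{Symp}(2n)$, i.e. to flows generated by Hamiltonians in the kernel of $\partial_{q_1}$. One must check that this centralizer is still large enough to move $\mathcal{Q}$ to $\{q_1=0\}$ — equivalently, that the transversality $\{p_1,Q\}(0)\ne 0$ furnishes enough freedom. Concretely I would look for $\Psi_2$ as the time-one flow of a vector field $X_K$ with $K=K(\widehat{q_1})$, solve the homological equation $X_K\cdot Q\equiv -(Q-q_1)$ modulo higher order along the path method (Moser's trick applied to the family $Q_t=(1-t)Q+tq_1$), and verify the division is possible precisely because $\partial_{q_1}Q\ne 0$ lets us solve for the $q_1$-independent generator. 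Everything else — the initial reduction of $P$, the implicit function step, and the bookkeeping with the $g(\widehat{\cdot})$ notation introduced above — is routine.
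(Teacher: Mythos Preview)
Your approach is correct but proceeds along a genuinely different route from the paper. You keep $\omega$ fixed in Darboux form throughout and normalise in two stages: first $P\mapsto p_1$ by Darboux extension, then $\mathcal{Q}\mapsto\{q_1=0\}$ by a Moser-type argument within the centraliser of $p_1$. The paper instead normalises the whole triple $(\omega,P,\mathcal{Q})$ simultaneously: it first uses an \emph{arbitrary} diffeomorphism to straighten the pair $(Z_P,\mathcal{Q})$ to $(\partial_{q_1},\{q_1=0\})$ (flow-box plus transversality), observes that in these coordinates $\omega=dP(\widehat{q}_1)\wedge dq_1+\widehat{\omega}$ with $\widehat{\omega}$ quasi-symplectic on $\mathbb{R}^{2n-1}(\widehat{q}_1)$, and then applies the odd-dimensional Darboux theorem to the pair $(P,\widehat{\omega})$ to finish. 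Your method is self-contained within the symplectic category, though your second stage is heavier than necessary: once $\mathcal{Q}=\{q_1-\varphi(\widehat{q}_1)=0\}$ you already have $\{p_1,\,q_1-\varphi\}=1$, so $(p_1,\,q_1-\varphi)$ extends directly to a full Darboux system and no Moser path or homological equation is needed. The paper's route, by contrast, establishes the preliminary decomposition $\omega=df\wedge dx+\widehat{\omega}$ that is reused verbatim in the proof of Theorem~\ref{thm-2}; that structural payoff is the main reason it is preferred here.
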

\begin{proof}
Normal form (\ref{nf-S0}) is standard and it is a consequence of (the proof of) Darboux's theorem.  For coherence, we sketch here a short proof, distilled from \cite{KM}, which will also be used later in the proof of Theorem \ref{thm-2}. It relies on normalising simultaneously the whole triple $(\omega,P,\mathcal{Q})$ (where $\omega$ is not yet in Darboux normal form (\ref{Darboux})). Since the Hamiltonian vector field $Z_P$ is transversal to the hypersurface $\mathcal{Q}$, we can bring the pair $(Z_P,\mathcal{Q})$ to the standard normal form $(\partial_{q_1},\{q_1=0\})$. Since $P$ is a first integral of $Z_P=\partial_{q_1}$ we have that $P=P(\widehat{q}_1)$, i.e. $P$ is independent of the coordinate $q_1$. The normal form of $Z_P$ implies, from the equation $Z_P\lrcorner \omega=dP$, that $\omega$ can be reduced to the preliminary normal form:
\[\omega=dP(\widehat{q}_1)\wedge dq_1+\widehat{\omega},\]
where $\widehat{\omega}$ is a 2-form such that $Z_{q_1}\lrcorner \widehat{\omega}=0$. Since $\omega$ is closed $\widehat{\omega}$ is also closed and since $\omega^{n}(0)\ne 0$, we obtain that $dP\wedge \widehat{\omega}^{n-1}(0)\ne 0$. In particular $\widehat{\omega}^{n-1}(0)\ne 0$ and thus the 2-form $\widehat{\omega}$ is a quasi-symplectic form\footnote{i.e. a closed 2-form of maximal rank in odd-dimensional space, c.f. Section 4.} in $\mathbb{R}^{2n-1}(\widehat{q}_1)$. Since $dP\wedge \widehat{\omega}^{n-1}(0)\ne 0$, using an odd-dimensional version of the Darboux theorem (c.f. \cite{Z}), we can bring the pair $(P(\widehat{q}_1),\widehat{\omega})$ to the normal form $(p_1,\sum_{i=2}^{n}dp_2\wedge dq_2)$ as in the proof of Lemma \ref{lem-3} in Section 4 below. This finishes the proof.
\end{proof}

\medskip

\begin{lem}
\label{lem-2}
Any symplectomorphism of $\omega=(\ref{Darboux})$ which preserves also the pair $(P,\mathcal{Q})=(\ref{nf-S0})$ is of the form:
\begin{equation}
\label{s1}
(p,q)\mapsto (p_1,q_1,B(\widehat{p}_1,\widehat{q}_1)),
\end{equation}
where the map:
\[B(\widehat{p}_1,\widehat{q}_1):=(B_1(\widehat{p}_1,\widehat{q}_1),\cdots, B_{2n-2}(\widehat{p}_1,\widehat{q}_1)),\]
is a symplectomorphism of the restriction $\omega|_{p_1=q_1=0}=\sum_{i=2}^ndp_i\wedge dq_i$  of $\omega$ on the $(2n-2)$-dimensional symplectic space $p_1=q_1=0$:
\begin{equation}
\label{ps1}
B^*\omega|_{q_1=p_1=0}=\omega|_{p_1=q_1=0}.
\end{equation}
\end{lem}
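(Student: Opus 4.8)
The plan is to write out what it means for a symplectomorphism $\Psi$ of $\omega=(\ref{Darboux})$ to preserve the pair $(P,\mathcal{Q})=(p_1,\{q_1=0\})$, and then extract the claimed block structure. Write $\Psi(p,q)=(P_1,Q_1,\dots,P_n,Q_n)$ in Darboux coordinates. The condition $\Psi^*P=P$ means $P_1=p_1$. The condition $\Psi(\mathcal{Q})=\mathcal{Q}$, i.e. that $\Psi$ preserves the hypersurface $\{q_1=0\}$ as a set, means $Q_1$ vanishes exactly on $\{q_1=0\}$, so $Q_1=q_1\cdot u(p,q)$ for a nonvanishing function $u$. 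The first task is therefore to upgrade this to $Q_1=q_1$ outright using the symplectic condition $\Psi^*\omega=\omega$.

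First I would exploit that $\Psi^*\omega=\omega$ together with $P_1=p_1$. Expanding $\Psi^*(dp\wedge dq)=dp_1\wedge dQ_1+\sum_{i=2}^n dP_i\wedge dQ_i$ and equating with $dp\wedge dq$, and then pairing both sides with the Hamiltonian vector field $\partial_{q_1}$ of $p_1$: on the one hand $\partial_{q_1}\lrcorner\,\omega=dp_1$; on the other hand, since $P_1=p_1$ is preserved, $\Psi$ conjugates $\partial_{q_1}$ to the Hamiltonian field of $P_1\circ\Psi^{-1}=p_1$, which is again $\partial_{q_1}$, so $\Psi_*\partial_{q_1}=\partial_{q_1}$. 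This forces every component $P_i$, $Q_i$ with $i\geq 2$ and $Q_1$ to be $\partial_{q_1}$-invariant except that $Q_1$ transforms as the flow parameter: tracking the $\partial_{q_1}$-flow (time-$t$ translation in $q_1$) through $\Psi$ shows $Q_1(p,q_1,\widehat q_1)=q_1+Q_1(p,0,\widehat q_1)$, and combined with $Q_1=q_1 u$ we get $Q_1(p,0,\widehat q_1)=0$ and $u\equiv 1$, hence $Q_1=q_1$ and $P_i=P_i(\widehat q_1)$, $Q_i=Q_i(\widehat q_1)$ for $i\geq 2$; moreover pairing $\omega=\Psi^*\omega$ with $\partial_{p_1}$ (the Hamiltonian field of $-q_1$, preserved since $Q_1=q_1$) shows in the same way that $P_i,Q_i$ are independent of $p_1$ as well, so they depend only on $(\widehat p_1,\widehat q_1)$. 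This establishes the form $(\ref{s1})$ with $B=(P_2,Q_2,\dots,P_n,Q_n)$.

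It remains to identify the symplectic condition on $B$. Restrict the identity $\Psi^*\omega=\omega$ to the submanifold $\{p_1=q_1=0\}$: the left side becomes $\Psi|^*\big(dp\wedge dq\big)$ restricted there, and since $Q_1=q_1$, $P_1=p_1$ and the remaining components involve only $(\widehat p_1,\widehat q_1)$, the pullback of the $dp_1\wedge dQ_1=dp_1\wedge dq_1$ piece restricts to zero on $\{p_1=q_1=0\}$, leaving exactly $B^*\big(\sum_{i=2}^n dp_i\wedge dq_i\big)=\sum_{i=2}^n dp_i\wedge dq_i$, which is $(\ref{ps1})$. Conversely one checks any such $B$ gives a symplectomorphism preserving the pair, though the lemma as stated only asserts the necessary direction. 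The only mildly delicate point — the main obstacle — is the bookkeeping in the second paragraph: making the flow-conjugacy argument rigorous so that $\partial_{q_1}$-invariance of the higher components and the normalization $Q_1=q_1$ both drop out cleanly, rather than just $\partial_{q_1}$-invariance of $dQ_1$. An alternative that sidesteps the flow language is to differentiate the component equations: from $\Psi^*\omega=\omega$ one reads off, via Cartan's formula and $L_{\partial_{q_1}}\omega=0$, that $\partial_{q_1}$ annihilates $dP_i, dQ_i$ ($i\geq2$) and that $\partial_{q_1}Q_1$ is a constant equal to $1$ (from matching the $dp_1\wedge dq_1$ coefficient), which integrates to the same conclusion; I would present whichever is shorter.
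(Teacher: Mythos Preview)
Your proposal is correct and follows essentially the same approach as the paper: both arguments use that preserving $p_1$ forces $\Psi$ to preserve the Hamiltonian vector field $Z_{p_1}=\partial_{q_1}$, hence (via flow conjugacy) the higher components are independent of $q_1$ and $Q_1=q_1+A(\widehat q_1)$; preservation of $\{q_1=0\}$ then kills $A$, after which the same argument with $Z_{q_1}=\partial_{p_1}$ removes the $p_1$-dependence. Your write-up is simply a more detailed unpacking of the paper's compressed sentence ``preserves also the Hamiltonian vector field $Z_{p_1}=\partial_{q_1}$ \ldots and it is thus of the form $(p_1,q_1+A(\widehat q_1),B(\widehat q_1))$''.
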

\begin{proof}
Consider symplectomorphisms of the pair $(\omega,p_1)$, where $\omega=(\ref{Darboux})$. Any such symplectomorphism preserves also the Hamiltonian vector field $Z_{p_1}=\partial_{q_1}$ of the function $p_1$ and it is thus of the form:
\begin{equation}
\label{s2}
(p,q)\mapsto (p_1,q_1+A(\widehat{q}_1),B(\widehat{q}_1)),
\end{equation}
for an appropriate function $A(\widehat{q}_1)$ and an appropriate map 
\[B(\hat{q}_1):=(B_1(\widehat{q}_1),\cdots, B_{2n-2}(\widehat{q}_1)).\] 
Now, the requirement that the symplectomorphism (\ref{s2}) preserves $\mathcal{Q}=\{q_1=0\}$, or what is equivalent, the ideal $I^{\omega}_1=<q_1>$, implies that $A(\widehat{q}_1)\equiv 0$ and hence, it preserves the coordinate  $q_1$ as well. From this it follows that the symplectomorphism (\ref{s2}) preserves also the Hamiltonian vector field $Z_{q_1}=\partial_{p_1}$ of $q_1$ and it is thus of the form (\ref{s1}):
\[(p,q)\mapsto (p_1,q_1,B(\widehat{p}_1,\widehat{q}_1)).\]
Since this map is a symplectomorphism of $\omega$ it immediately follows that the map $B(\widehat{p}_1,\widehat{q}_1)$ is a symplectomorphism of the restriction $\omega|_{p_1=q_1=0}=\sum_{i=2}^ndp_i\wedge dq_i$, i.e. equation (\ref{ps1}) holds.
\end{proof}

\subsection{Proof of Theorem \ref{thm-1}}

We fix the Darboux normal form $\omega=(\ref{Darboux})$ and we normalise the diffeomorphism $\Phi$ with fixed conditions (\ref{con-2}). For $n=1$, Theorem \ref{thm-1} is an immediate corollary of Lemmas \ref{lem-1} and \ref{lem-2}. For $n\geq 2$, we 
apply first Lemma \ref{lem-1} to the pair $(P_1,Q_1)$ and we obtain the preliminary normal form:
\[P_1=p_1, \quad Q_1\in I_1^{\omega}, \quad \partial_{q_1}Q_1(0)\ne 0.\]
The next step is to normalise the pair of functions $(P_2,Q_2)$ by symplectomorphisms  of the triple $(\omega, p_1,I^{\omega}_1)$, which, by Lemma \ref{lem-2} are mappings of the form (\ref{s1}):
\[(p,q)\mapsto (p_1,q_1,B(\widehat{p}_1,\widehat{q}_1)).\] 
Notice that any such symplectomorphism sends the function $Q_1$ to some new function $\widetilde{Q}_1$, but still in the ideal $I^{\omega}_1$. For simplicity we denote this new function by $Q_1$ as well.  To perform the normalisation restrict the pair $(P_2,Q_2)$ to the symplectic subspace $p_1=q_1=0$ and apply again Lemma \ref{lem-1}. We obtain the normal form for the restricted pair:
\[P_2|_{p_1=q_1=0}=p_2, \quad Q_2|_{p_1=q_1=0}\in <q_2>, \quad \partial_{q_2}Q_2(0)\ne 0,\]
by a symplectomorphism $B(\widehat{p}_1,\widehat{q}_1)$ of $\omega|_{p_1=q_1=0}$.  It follows that the pair $(P_2,Q_2)$ can be reduced, by a symplectomorphism of the triple $(\omega,p_1,I^{\omega}_1)$, to the normal form:
\[P_2=p_2 \mbox{mod} I^{\omega}_2, \quad Q_2\in I^{\omega}_3, \quad \partial_{q_2}Q_2(0)\ne 0.\]
Now  Lemma \ref{lem-2} applied to the triple $(\omega|_{p_1=q_1=0},p_2|_{p_1=q_1=0},I^{\omega}_3|_{p_1=q_1=0})$, implies that any symplectomorphism of the whole tuple $(\omega, p_1,I^{\omega}_1,p_2 \mbox{mod}I^{\omega}_2,I^{\omega}_3)$ is necessarily of the form:
\[(p,q)\mapsto (p_1,q_1,p_2,q_2,B(\widehat{p}_1,\widehat{q}_1,\widehat{p}_2,\widehat{q}_2)),\]
for some symplectomorphism
\[B(\widehat{p}_1,\widehat{q}_1,\widehat{p}_2,\widehat{q}_2)=(B_1(\widehat{p}_1,\widehat{q}_1,\widehat{p}_2,\widehat{q}_2),\cdots,B_{2n-4}(\widehat{p}_1,\widehat{q}_1,\widehat{p}_2,\widehat{q}_2))\]
of the restriction $\omega|_{p_1=q_1=p_2=q_2=0}=\sum_{i=3}^ndp_i\wedge dq_i$. This also proves the theorem for $n=2$. For $n\geq 3$, we continue in the same way as before and we arrive, after $(n-2)$ more consecutive steps of restrictions and normalisations obtained by applying Lemmas \ref{lem-1} and \ref{lem-2}, to the required normal form:
\[P_1=p_1,\]
\[Q_1\in I^{\omega}_1, \quad \partial_{q_1}Q_1(0)\ne 0, \]
\[P_2=p_2 \mbox{mod} I^{\omega}_2, \]
\[Q_2\in I^{\omega}_3, \quad \partial_{q_2}Q_2(0)\ne 0,\]
\[\vdots\]
\[P_n=p_n \mbox{mod}I^{\omega}_{2n-2},\]
\[Q_n\in I^{\omega}_{2n-1}, \quad \partial_{q_n}Q_n(0)\ne 0.\]
Successive applications of Lemma \ref{lem-2} in each step of the previous normalisations and restrictions,  implies that any symplectomorphism of $\omega=(\ref{Darboux})$ which preserves the whole tuple 
\[(p_1,I^{\omega}_1,\cdots,p_n\mbox{mod}I^{\omega}_{2n-2},I^{\omega}_{2n-1})\] 
is necessarily the identity, and the theorem is proved.

\qed

\section{Proof of Theorem \ref{thm-2}}
\subsection{Auxiliary Lemmas: Pairs of Functions in Quasi-\newline
Symplectic Space}
A quasi-symplectic space is the odd-dimensional analog of a symplectic space, i.e. a space $\mathbb{R}^{2n+1}$ endowed with a closed 2-form $\widehat{\omega}$ of maximal rank $2n$ (usually called a quasi-symplectic structure). The 1-dimensional (line) field of kernels of the form $\widehat{\omega}$ foliates the space $\mathbb{R}^{2n+1}$ in 1-dimensional curves and thus any quasi-symplectic space is fibered over a symplectic space $\mathbb{R}^{2n}$ (the base of the foliaton) with 1-dimensional fibers. More precisely there exists a projection $\pi:(\mathbb{R}^{2n+1},\omega)\rightarrow (\mathbb{R}^{2n},\widetilde{\omega})$ where $\widetilde{\omega}$ is a symplectic form such that $\widehat{\omega}=\pi^*\widetilde{\omega}$.  By the odd-dimensional analog of Darboux's theorem (c.f. \cite{Z}) all quasi-symplectic spaces are locally equivalent, i.e. there exists coordinates $(y,p,q):=(y,p_1,q_1,\cdots,p_n,q_n)$ such that $\widehat{\omega}$ is reduced to the same Darboux normal form $\widehat{\omega}=(\ref{Darboux})$.  Notice that in these coordinates $\pi(y,p,q)=(p,q)$ and also $\widetilde{\omega}=(\ref{Darboux})$.

Consider now a pair of functions $(f,g)$ in quasi-symplectic space $(\mathbb{R}^{2n+1},\widehat{\omega})$. The problem is to classify the pair $(f,g)$ under quasi-symplectomorphisms, i.e. diffeomorphisms of the quasi-symplectic Darboux normal form $\widehat{\omega}=(\ref{Darboux})$.  We consider only the non-singular case, where the pair $(f,g)$ is in general position. By this we mean that both of the functions are non-singular, they are transversal to each other, and are also transversal to the kernel field of $\widehat{\omega}$:
\[ df(0)\ne 0, \quad dg(0)\ne 0,\quad df\wedge dg(0)\ne 0\]
\[df\wedge \widehat{\omega}^n(0)\ne 0, \quad dg\wedge \widehat{\omega}^n(0)\ne 0.\]

 \begin{lem}
 \label{lem-3}
 In the space of $2n$-jets of non-singular pairs $(f,g)$ there exists an open set $U$ such that any pair $(f,g)$ with $j^{2n}(f,g)\in U$ is equivalent, under quasi-symplectomorphisms of $\widehat{\omega}=(\ref{Darboux})$, to the normal form
 \begin{equation}
 \label{nf-pair}
 f=y,\quad g=r(y)+p_1+\sum_{i=2}^{n}(p_i+\widetilde{P}_i(p,q))y^{2i-2}+\sum_{i=1}^n\widetilde{Q}_i(p,q)y^{2i-1}+\phi(y,p,q)y^{2n},
 \end{equation}
 where $r'(0)\ne 0$, $\phi(y,0,0)=0$, and the functions $\{\widetilde{P}_i(p,q)\}_{i=2}^n$, $\{\widetilde{Q}_i(p,q)\}_{i=1}^n$, belong to the finitely generated ideals:
\begin{equation}
\label{r-ideals-1} 
\{\widetilde{P}_i\in I_{2i-2}\}_{i=2}^n, \quad \{\widetilde{Q}_i\in I_{2i-1}\}_{i=1}^n.
\end{equation}
The function of $1$-variable $r$, the function of $(2n+1)$-variables $\phi$, and the $(2n-1)$ functions of $2n$-variables $\{\widetilde{P}_i\}_{i=2}^{n}$, $\{\widetilde{Q}_i\}_{i=1}^n$ in the corresponding ideals (\ref{r-ideals-1}) above, are functional invariants.

 \end{lem}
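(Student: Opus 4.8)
The plan is to mimic, in the quasi-symplectic (odd-dimensional) setting, the inductive normalisation strategy used in the proof of Theorem \ref{thm-1}, but now keeping track of a privileged function $y$ playing the role of ``time''. First I would bring the pair $(f,\widehat{\omega})$ to a preliminary normal form: since $df\wedge\widehat{\omega}^n(0)\ne 0$, the function $f$ is non-singular and transversal to the kernel foliation, so by the odd-dimensional Darboux theorem (c.f. \cite{Z}) one can choose quasi-Darboux coordinates $(y,p,q)$ with $\widehat{\omega}=dp\wedge dq$ and $f=y$. The residual group is then the group of quasi-symplectomorphisms preserving $f=y$, i.e. diffeomorphisms fixing $y$ and acting symplectically on the $(p,q)$-slices; this is exactly where the machinery of Lemmas \ref{lem-1} and \ref{lem-2} (applied slice-wise, with $y$ as a parameter) becomes available.

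Next I would expand $g$ in a Taylor series in $y$: write $g=\sum_{k\ge 0}g_k(p,q)y^k$ (plus the flat-in-$y$ tail, absorbed into $\phi(y,p,q)y^{2n}$ after the finitely many steps). The openness hypothesis $j^{2n}(f,g)\in U$ should encode precisely the non-degeneracy conditions on the low-order coefficients $g_0,g_1,\dots,g_{2n-1}$ needed to run the induction: namely that $g_0$ has non-zero differential after subtracting the $y$-part (giving the $r(y)+p_1$ term, with $r'(0)\ne0$), and that successive coefficients $g_{2i-2}$, $g_{2i-1}$ can be straightened, using a residual symplectomorphism of the $(p,q)$-slice, into the form $p_i+\widetilde P_i$ with $\widetilde P_i\in I_{2i-2}$ and $\widetilde Q_i$ with $\partial_{q_i}\widetilde Q_i(0)\ne0$, $\widetilde Q_i\in I_{2i-1}$. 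At step $i$ one applies Lemma \ref{lem-1} to normalise the pair $(g_{2i-2},g_{2i-1})$ restricted to the slice $\{p_1=q_1=\dots=p_{i-1}=q_{i-1}=0\}$ to $(p_i,q_i)$, and then Lemma \ref{lem-2} to identify the remaining freedom as a symplectomorphism of the next smaller slice depending on $(\widehat p_1,\widehat q_1,\dots,\widehat p_i,\widehat q_i)$. The bookkeeping is identical to the proof of Theorem \ref{thm-1}, except that every map here may depend additionally on the parameter $y$, and the coefficients being normalised are the $y$-Taylor coefficients of the single function $g$ rather than components of a diffeomorphism $\Phi$. After $n$ steps, the residual group preserving the tuple $(y,\,g \bmod y^{2n})$ is trivial, so the remaining data $r$, $\phi$, and the $\widetilde P_i$, $\widetilde Q_i$ (in the stated ideals) are genuine functional invariants.

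The main obstacle I anticipate is twofold. First, making precise the open set $U$: one must show that the finitely many non-degeneracy conditions appearing along the induction (non-vanishing of certain derivatives of the $g_k$ and of $\{f,g\}$-type brackets at the origin) are (i) jet conditions of order $\le 2n$, (ii) genuinely open, and (iii) preserved under the coordinate changes used, so that they can be arranged to hold throughout. Second, the slice-wise application of Lemmas \ref{lem-1} and \ref{lem-2} must be upgraded to a $y$-parametrised version — i.e. one needs the normalising symplectomorphisms to depend smoothly (or analytically) on $y$ as a parameter, which requires checking that the homotopy/path-method arguments in the proofs of those lemmas go through with parameters. This is routine in principle (Darboux-type theorems are well-behaved in parameters), but it is the step where care is genuinely needed, and it is also where the finite order $2n$ enters: only the $y$-jet of $g$ up to order $2n-1$ is normalised, the tail $\phi(y,p,q)y^{2n}$ remaining as a functional modulus because the available group action becomes trivial once the lower-order coefficients are fixed.
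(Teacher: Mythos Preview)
Your overall architecture (reduce $f$ to $y$ via the odd-dimensional Darboux theorem, then normalise the $y$-Taylor coefficients of $g$ using the machinery behind Theorem~\ref{thm-1}) matches the paper, but there is a genuine misconception about the residual group that derails the rest of the plan. A quasi-symplectomorphism of $\widehat{\omega}=dp\wedge dq$ must preserve the kernel line field $\ker\widehat{\omega}=\langle\partial_y\rangle$, hence it descends to a \emph{single} symplectomorphism of the reduced space $(\mathbb{R}^{2n}(p,q),dp\wedge dq)$. Concretely, the maps preserving both $\widehat{\omega}$ and $f=y$ are exactly $(y,p,q)\mapsto(y,A(p,q),B(p,q))$ with $(A,B)$ a symplectomorphism \emph{independent of $y$} (c.f.\ \cite{Z}). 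So your anticipated ``main obstacle'' of upgrading Lemmas~\ref{lem-1}--\ref{lem-2} to $y$-parametrised versions does not arise; but, more seriously, allowing $y$-dependent slice symplectomorphisms as you propose would use transformations that are \emph{not} quasi-symplectomorphisms of $\widehat{\omega}$, and would over-normalise: with that larger group the tail $\phi$ and the $\widetilde P_i,\widetilde Q_i$ would no longer be invariants, contradicting the statement you are trying to prove.

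Once the residual group is correctly identified, the paper's proof is a one-step reduction rather than a stepwise induction in $y$: write $g=r(y)+G(y,p,q)$ with $r=g|_{p=q=0}$ (so $r'(0)\ne0$ from $dg\wedge\widehat{\omega}^n(0)\ne0$), expand $G=P_1(p,q)+Q_1(p,q)y+\cdots+Q_n(p,q)y^{2n-1}+\phi\,y^{2n}$, and observe that the single available symplectomorphism $(A,B)$ acts on the tuple $(P_1,Q_1,\dots,P_n,Q_n)$ exactly as a source symplectomorphism acts on the components of a map $\Phi:(\mathbb{R}^{2n},dp\wedge dq)\to\mathbb{R}^{2n}$. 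The open set $U$ is precisely the (order-$2n$ jet) condition $dP_1\wedge dQ_1\wedge\cdots\wedge dQ_n(0)\ne0$ making $\Phi$ a local diffeomorphism; Theorem~\ref{thm-1} then gives the normal form (\ref{nf-pair}) in one stroke, and its rigidity statement (the only symplectomorphism preserving the normal form is the identity) immediately yields that $r$, $\phi$, and the $\widetilde P_i,\widetilde Q_i$ are functional invariants.
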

 \begin{proof}
We normalise first $f$ by quasi-symplectomorphisms of $\widehat{\omega}=(\ref{Darboux})$. These are maps of the form (c.f. \cite{Z})
\[(y,p,q)\mapsto(Y(y,p,q),A(p,q),B(p,q)),\]
where $Y(y,p,q)$ is a function such that $\partial_yY(0)\ne 0$ and the map $(A(p,q),B(p,q)):=(A_1(p,q),B_1(p,q),\cdots,A_n(p,q),B_n(p,q))$ is a symplectomorphism of $\widetilde{\omega}=(\ref{Darboux})$. Since the condition $df\wedge \widehat{\omega}^n(0)\ne 0$ is equivalent to $\partial_yf(0)\ne 0$ we can send $f$, by an appropriate choice of the function $Y(y,p,q)$ to the normal form $f=y$. Thus it remains to normalise the function $g$ by quasi-symplectomorphisms of $\widehat{\omega}=(\ref{Darboux})$ which also preserve $y$, i.e. of the form:
\[(y,p,q)\mapsto (y,A(p,q),B(p,q)).\]
By division with the ideal $<p,q>$ we can write $g=r(y)+G(y,p,q)$, for some function $r(y)=g|_{p=q=0}$, with $r'(0)\ne 0$ (because $dg\wedge \widehat{\omega}^n(0)\ne 0\Leftrightarrow \partial_yg(0)\ne 0$) and some function $G$ such that $G(y,0,0)=0$. We now expand $G$ as 
\[G=P_1(p,q)+Q_1(p,q)y+P_2(p,q)y^2+Q_2(p,q)y^3+\cdots .\]
 Since $df\wedge dg(0)\ne 0\Leftrightarrow dy\wedge dg(0)\ne 0$ we can assume that $\partial_{p_1}P_1(0)\ne 0$. For a generic germ $g$ we can suppose that the functions $P_1, Q_1, \cdots, P_{n}, Q_n$ are differentially independent:
\[dP_1\wedge dQ_1\wedge \cdots \wedge dP_n\wedge dQ_n(0)\ne 0,\]
a condition which defines the open set $U$ in the statement of the theorem.  This implies that the corresponding map $\Phi(p,q)=(P_1(p,q),\cdots,Q_n(p,q))$ defines a diffeomorphism in the symplectic space $(\mathbb{R}^{2n},\widetilde{\omega}=(\ref{Darboux}))$. Thus the problem to obtain exact normal form for $g$ under quasi-symplectomorphisms of the pair $(y,\widehat{\omega}=(\ref{Darboux}))$, reduces to the exact classification of the diffeomorphism $\Phi$ under symplectomorphisms of $\widetilde{\omega}=(\ref{Darboux})$. The rest of the proof follows now from Theorem \ref{thm-1}.
 \end{proof}
 
 \subsection{Proof of Theorem \ref{thm-2}}

The proof of Theorem \ref{thm-2} is analogous to the proof of Lemma \ref{lem-1}: we start with a whole triple $(\omega,f,H)$ which we will bring simultaneously to the desired normal form. Since $Z_f$ is non-singular we can rectify it to $Z_f=\partial_x$. Since it is Hamiltonian for $f$ we obtain $\partial_xf=0\Leftrightarrow f=\widehat{f}(y,p,q)$ for some function $\widehat{f}$ defined on $\mathbb{R}^{2n+1}(y,p,q)$. We write $f$ instead of $\widehat{f}$ (meaning that $f=f(y,p,q)$ is independent of the variable $x$). Since $\{f,h\}(0)=0\Leftrightarrow \partial_xh(0)=0$ and $\{f,\{f,h\}\}(0)\ne 0\Leftrightarrow \partial^2_{x^2}h(0)\ne 0$, we obtain $H=\{x^2+a(y,p,q)x+b(y,p,q)=0\}$ for some functions $a$, $b$ defined in $\mathbb{R}^{2n+1}(y,p,q)$, vanishing at the origin and such that $db(0)\ne 0$ (by the non-singularity of $H$). The change of coordinates $x\mapsto x-a(y,p,q)/2$ preserves the Hamiltonian vector field $\partial_x$ and sends $H$ to $H=\{x^2+g(y,p,q)=0\}$ for some function $g$ defined on $\mathbb{R}^{2n+1}(y,p,q)$, vanishing at the origin and such that $dg(0)\ne 0$.  Denote by 
\[C_{f,h}=\{\{f,h\}=0\}\]
the critical hypersurface of the pair $(f,H)$ (it is invariantly associated to the triple $(\omega,f,H)$). In the coordinates above it is also reduced to normal form 
\[C_{f,h}=\{x=0\}.\] 
Now, as in the proof of Lemma \ref{lem-1}, the normal form $\partial_x$ of $Z_f$ implies, by equation $Z_f\lrcorner \omega=df$,
that $\omega$ can be written as:
\[\omega=df\wedge dx+\widehat{\omega},\]
where $\widehat{\omega}$ is a quasi-symplectic form in $\mathbb{R}^{2n+1}(y,p,q)$ (by the same reasoning as in Lemma \ref{lem-1}). Thus we have reduced the triple $(\omega,f,H)$ to the preliminary normal form:
\begin{equation}
\label{pnf}
\omega=df\wedge dx+\widehat{\omega},\quad f=f(y,p,q), \quad H=\{x^2+g(y,p,q)=0\}
\end{equation}
where the pair $(Z_f,C_{f,h})$ is in simple normal form $(\partial_x,\{x=0\})$. To continue the normalisation further we consider diffeomorphisms preserving the normal form $(\partial_x,\{x=0\})$ of the pair $(Z_f,C_{f,h})$. Any such diffeomorphism preserves also the coordinate $x$ and it is thus of the form:
\[(x,y,p,q)\mapsto (x,A(y,p,q))\] 
for some diffeomorphism $A(y,p,q):=(A_1(y,p,q),\cdots,A_{2n+1}(y,p,q))$ of $\mathbb{R}^{2n+1}$. Hence the problem to obtain exact normal form reduces to the classification of pairs of functions $(f,g)$ in the quasi-symplectic space $(\mathbb{R}^{2n+1}(y,p,q),\widehat{\omega})$. By the transversality of $f$ with $H$ we obtain that $df\wedge dg(0)\ne 0$, by the non-degeneracy of $\omega$ we obtain $df\wedge \widehat{\omega}^{n}(0)\ne 0$, and finally by the condition $\{h,\{f,h\}(0)\ne 0$ we obtain $dg\wedge \widehat{\omega}^n(0)\ne 0$. Thus the pair $(f,g)$ is a non-singular and the rest of the proof follows from Lemma \ref{lem-3}. The final normal form announced in the theorem is obtained by the change $x\mapsto -x$.

\qed

\section*{Acknowledgements}
This research has been supported by the Research Foundation of S\~ao Paulo (FAPESP), grand No: 2017/23555-19.

\end{document}